\newtheorem{thm}[equation]{Theorem}
 \newtheorem{prop}[equation]{Proposition}
 \newtheorem{lem}[equation]{Lemma}
 \newtheorem{cor}[equation]{Corollary}
 \theoremstyle{definition}
 \newtheorem{remark}[equation]{Remark}
\numberwithin{equation}{section}
\newcommand{\bbP}{{\mathbb{P}}}
\newcommand{\bbA}{{\mathbb{A}}}
\newcommand{\bbQ}{{\mathbb{Q}}}
\newcommand{\GL}{\mathrm{GL}}
\newcommand{\Gr}{\operatorname{Gr}}
\newcommand{\Char}{\mathop{\mathrm{char}}\nolimits}
\newcommand{\id}{\mathrm{id}}
\newcommand{\Mor}{\operatorname{Mor}}
\newcommand{\Spec}{\operatorname{Spec}}
\newcommand{\Aut}{\operatorname{Aut}}
\newcommand{\Sym}{\operatorname{S}}
\newcommand{\moduli}[2]{\overline{M}_{#1,#2}}
\DeclareTextFontCommand{\textcyr}{\fontencoding{OT2}
    \fontfamily{wncyr}\fontseries{m}\fontshape{n}\selectfont}
\author{Mathieu Florence}
\address{Institut de Math\'ematiques de Jussieu, Universit\'e Paris 6,
75005 Paris, France}
\email{mathieu.florence@imj-prg.fr}
\thanks{Florence was partially supported by the French National Agency (Project GeoLie ANR-15-CE40-0012).}
\author{Norbert Hoffmann}
\address{Department of Mathematics and Computer Studies,
Mary Immaculate College,
Limerick, Ireland}
\email{norbert.hoffmann@mic.ul.ie}
\author{Zinovy Reichstein}
\address{Department of Mathematics\\University of British Columbia\\ BC, Canada V6T 1Z2}
\email{reichst@math.ubc.ca}
\thanks{Reichstein was partially supported by
National Sciences and Engineering Research Council of
Canada grant No. 253424-2017} 
\begin{document}

\title[Forms of moduli spaces]
{On the rationality problem for forms of moduli spaces of stable marked curves of positive genus} 

\keywords{Stable curves, marked curves, moduli spaces, rationality problem, twisted varieties, Weil restriction}
\subjclass[2010]{14E08, 14H10, 14G27, 14H45}

\begin{abstract}
Let $M_{g, n}$ (respectively, $\moduli{g}{n}$)
be the moduli space of smooth (respectively stable) curves of genus $g$
with $n$ marked points. Over the field of complex numbers, it is a classical problem
in algebraic geometry to determine whether or not
$M_{g, n}$ (or equivalently, $\moduli{g}{n}$) is
a rational variety. Theorems of J.~Harris,
D.~Mumford, D.~Eisenbud and G.~Farkas assert that
$M_{g, n}$ is not even unirational for any $n \geqslant 0$ if $g \geqslant 22$.
Moreover, P.~Belorousski and A.~Logan showed that $M_{g, n}$ is unirational
for only finitely many pairs $(g, n)$ with $g \geqslant 1$. Finding 
the precise range of pairs $(g, n)$, where $M_{g, n}$ is rational, stably 
rational or unirational, is a problem of ongoing interest.

In this paper we address the rationality problem for twisted forms of $\moduli{g}{n}$
defined over an arbitrary field $F$ of characteristic 
$\neq 2$. We show that all $F$-forms of $\moduli{g}{n}$ are stably rational 
for $g = 1$ and $3 \leqslant n \leqslant 4$, $g = 2$ and $2 \leqslant n \leqslant 3$, $g = 3$ and 
$1 \leqslant n \leqslant 14$, $g = 4$ and $1 \leqslant n \leqslant 9$,
$g = 5$ and $1 \leqslant n \leqslant 12$.
\end{abstract}

\maketitle

\section{Introduction}
Let $M_{g, n}$ (respectively $\moduli{g}{n}$)
be the moduli space of smooth (respectively stable) curves of genus $g$
with $n$ marked points. Recall that these moduli spaces are defined
over the prime field ($\bbQ$ in characteristic zero and $\mathbb F_p$ in
characteristic $p$). The purpose of this paper is to address the
rationality problem for twisted forms of $\moduli{g}{n}$. Recall that 
a {\em form} of a scheme $X$ defined over a field $F$ is another 
scheme $Y$, also defined over $F$,  such that $X$ and $Y$ become 
isomorphic over the separable closure $F^{sep}$. We will use 
the terms ``form", ``twisted form" and ``$F$-form"
interchangeably.
Forms of $\moduli{g}{n}$ are of interest because they shed light
on the arithmetic geometry of $\moduli{g}{n}$, and because they
are coarse moduli spaces for natural moduli problems
in their own right; see~\cite[Remark 2.4]{fr}.

This paper is a sequel to~\cite{fr}, where two of us considered twisted forms of $\moduli{0}{n}$.
The main results of~\cite{fr} can be summarized as follows.

\begin{thm} \label{thm.g=0} Let $F$ be a field of characteristic $\neq 2$ and
$n \geqslant 5$ be an integer. Then

\smallskip
(a)  all $F$-forms of $\moduli{0}{n}$ are unirational.

\smallskip
(b) If $n$ is odd, all $F$-forms of $\moduli{0}{n}$ are rational.

\smallskip
(c)
If $n$ is even, then there exist fields $E/F$ and $E$-forms 
of $\moduli{0}{n}$ that are not stably rational (or even retract rational) over $E$.
\end{thm}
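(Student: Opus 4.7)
The plan rests on a description of the forms. For $n\geqslant 5$, the automorphism group of $\moduli{0}{n}$ is the symmetric group $S_n$ acting by permutation of the marked points, a theorem of Bruno--Mella (extended to positive characteristic by Massarenti--Mella). Hence $F$-forms of $\moduli{0}{n}$ are classified by $H^1(F,S_n)$, which is in natural bijection with isomorphism classes of étale $F$-algebras $L$ of rank $n$; write $\moduli{0}{L}$ for the form associated to $L$. The crucial birational model is obtained as the quotient by the diagonal $\PGL_2$-action of the locus of pairwise distinct $L$-points inside the Weil restriction $\mathrm{R}_{L/F}(\mathbb{P}^1_L)$; equivalently, $\mathrm{R}_{L/F}(\mathbb{P}^1_L)$ is birational to a $\PGL_2$-torsor over $\moduli{0}{L}$.

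For part (a), the plan is to combine two easy observations. First, $\mathrm{R}_{L/F}(\mathbb{P}^1_L)$ is an $F$-rational variety, because it contains the open subscheme $\mathrm{R}_{L/F}(\mathbb{A}^1_L)\cong \mathbb{A}^n_F$. Second, the rational map $\mathrm{R}_{L/F}(\mathbb{P}^1_L)\dashrightarrow \moduli{0}{L}$ sending an $L$-tuple on $\mathbb{P}^1$ to its class modulo $\PGL_2$ is dominant on the distinct-points locus. This gives unirationality of every form.

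For part (b), when $n$ is odd I would upgrade unirationality to rationality by exploiting the odd-degree splitting of a conic. The generic fiber of $\mathrm{R}_{L/F}(\mathbb{P}^1_L)\dashrightarrow \moduli{0}{L}$ is a $\PGL_2$-torsor, whose isomorphism class corresponds to a conic $C$ over the function field of $\moduli{0}{L}$. The universal $L$-configuration endows $C$ with a closed point of degree $n$; since the class of $C$ in $\Br$ is $2$-torsion and is killed by any odd-degree zero-cycle, $C$ is generically split. Choosing three points on $C\cong\mathbb{P}^1$ in a canonical way (for instance as roots of a cubic built from the symmetric functions of the configuration) produces a birational section of the $\PGL_2$-action, and hence rationality.

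For part (c), the plan is to produce a versal counterexample via a cohomological obstruction. I would take $E$ to be the function field of a sufficiently generic $S_n$-torsor over $F$ and $L$ the associated generic étale $E$-algebra, so that $\moduli{0}{L}$ carries every cohomological invariant compatible with the classifying-space formalism. The target invariant is an unramified degree-$2$ Brauer class attached to the conic bundle from~(b): when $n$ is odd this class is annihilated by the odd-degree cycle above, but when $n$ is even no such cancellation is available and one expects a genuinely nontrivial class. The hard part---and the main obstacle of the theorem---is to check that this class is truly unramified on the boundary of the compactification $\moduli{0}{L}$ and survives every stable birational modification (including products with projective spaces), so that it detects the failure not only of stable rationality but even of retract rationality.
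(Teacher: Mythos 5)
This theorem is not proved in the present paper; it is quoted from the authors' earlier work~\cite{fr}. So there is no ``paper proof'' here to compare against directly, but the present paper does develop the relevant machinery (anti-versality, Proposition~\ref{prop.anti1}, Lemma~\ref{lem.weil}) in a way that reflects the strategy of~\cite{fr}, and I can assess your proposal against that.

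Your setup is right: forms are classified by $H^1(F, \Sym_n)$, hence by \'etale algebras $L$ of rank $n$, and the twisted configuration space $R_{L/F}(\bbP^1_L)$ gives a $\PGL_2$-torsor over the generic point of $\moduli{0}{L}$. Part~(a) of your argument is correct and complete: $R_{L/F}(\bbP^1_L)$ contains $R_{L/F}(\bbA^1_L)\cong\bbA^n_F$ as an open subscheme, hence is $F$-rational, and the quotient map to $\moduli{0}{L}$ is dominant, giving unirationality.

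Part~(b) has a genuine gap. Your argument (odd-degree zero-cycle kills the Brauer class of the conic, hence the conic is split, hence the $\PGL_2$-torsor is split) correctly establishes that the rational quotient map $R_{L/F}(\bbP^1_L)\dashrightarrow\moduli{0}{L}$ has a generic section, so $R_{L/F}(\bbP^1_L)\simeq\moduli{0}{L}\times\PGL_2$ birationally. Since the left-hand side is $F$-rational, this proves that $\moduli{0}{L}\times\bbA^3$ is rational, i.e.\ that $\moduli{0}{L}$ is \emph{stably} rational --- but the theorem asserts rationality, and stably rational does not imply rational in dimension $n-3\geqslant 3$ (i.e.\ already for $n=7$). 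Your proposed fix, ``choosing three points on $C\cong\bbP^1$ in a canonical way as roots of a cubic built from the symmetric functions,'' does not close the gap: a canonical degree-$3$ covariant cycle on $C$ is a degree-$3$ closed subscheme, not three $F$-rational points, so it only reduces the structure group to a finite group (a form of $\Sym_3$), which is not enough to pin down an isomorphism $C\cong\bbP^1$ canonically. Some additional geometric input, beyond splitness of the torsor, is needed to get a genuine rational parametrization rather than a stable one.

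Part~(c) is a plan, not a proof, and you say so yourself: ``The hard part\ldots is to check that this class is truly unramified\ldots and survives every stable birational modification.'' That is precisely the content of the statement for even~$n$; identifying a candidate Brauer class and asserting that it ``should'' be a nontrivial unramified invariant is the beginning of the argument, not the argument. Constructing the specific fields $E$ and the specific $\Sym_n$-torsors, and then verifying unramifiedness (or, alternatively, exhibiting a nontrivial unramified cohomology class or using degree formulas / incompressibility to obstruct retract rationality) is where the work lies.

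In summary: (a) is correct; (b) proves stable rationality but not rationality as claimed; (c) is an outline with the key verification missing.
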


In the present paper we will study the rationality problem
for forms of $\moduli{g}{n}$ in the case, where $g \geqslant 1$.
Here the rationality problem 
for the usual (split) moduli space $\moduli{g}{n}$ (or equivalently, for $M_{g, n}$)
over the field of complex numbers is already highly non-trivial. 
Theorems of J.~Harris,
D.~Mumford, D.~Eisenbud~\cite{harris-mumford, eisenbud-harris}
and G.~Farkas~\cite{farkas} assert that
if $g \geqslant 22$, then $M_{g, 0}$ is not unirational (and hence, 
neither is $M_{g, n}$ for any $n \geqslant 0$).
Moreover, work of P.~Belorousski~\cite{belorousski} (for $g = 1$) 
and A.~Logan~\cite{logan} (for $g \geqslant 2$) tells us that $M_{g, n}$ is unirational
for only finitely many pairs $(g, n)$ with $g \geqslant 1$. 
Finding the precise range of pairs $(g, n)$, where 
$M_{g, n}$ is rational, stably rational or unirational, 
is a problem of ongoing interest. In particular,
over $\mathbb C$, $M_{g, n}$ is known to be rational for $1 \leqslant n \leqslant r_g$ and
not unirational for $n \geqslant n_g$, where

\medskip
\begin{center}
 \begin{tabular}{c | c c c c c|} 
 $g$ &  \; 1 \; & \; 2 \; & \; 3 \; & \; 4 \; & \; 5 \; \\ 
 \hline
 $r_g$ & 10 & 12 & 14 & 15 & 12 \\ 
 \hline
 $n_g$ & 11 & - & - & 16 & 15 \\
 \hline
\end{tabular} \, ,
\end{center}

\medskip
\noindent
see~\cite{logan} and~\cite{cf}. Surprisingly, we have not been able to find specific values for
$n_2$ and $n_3$ in the literature, even though Logan showed that they exist; see~\cite[Theorem 2.4]{logan}. The main result of the present paper is as follows.

\begin{thm} \label{thm.main} Let $F$ be a field of characteristic $\neq 2$. 
Then every $F$-form of $\moduli{g}{n}$ is stably rational over $F$ if 

\smallskip
$g = 1$ and $3 \leqslant n \leqslant 4$, 

\smallskip
$g=2$ and $2 \leqslant n \leqslant 3$,

\smallskip
$g=3$ and $1 \leqslant n \leqslant 14$,

\smallskip
$g=4$ and $1 \leqslant n \leqslant 9$,

\smallskip
$g= 5$ and $1 \leqslant n \leqslant 12$.
\end{thm}

Several remarks are in order.

\smallskip
(1) Stable rationality of every form of $\moduli{g}{n}$ is a priori 
much stronger than stable rationality of $\moduli{g}{n}$ itself.
For example, $\moduli{1}{1} \simeq \mathbb P^1$ is rational,
but its forms are conic curves which are not unirational in general.

\smallskip
(2) Theorem~\ref{thm.main} also holds for 
$(g, n) = (1,2)$  (respectively, $(2, 1)$), provided 
$\Char(F) = 0$ (respectively, $\Char(F) \neq 2, 3$);
see Remark~\ref{rem.small}. 

\smallskip
(3) By~\cite[Theorem 6.1(b)]{dr}, every
$F$-form of $\moduli{1}{n}$ is unirational for $3 \leqslant n \leqslant 9$. 

\smallskip
(4) The situation we encountered in Theorem~\ref{thm.g=0}(c), where some 
forms of $\moduli{0}{n}$ are stably rational and others are not, does not arise for any of the pairs $(g, n)$
covered by Theorem~\ref{thm.main}. We do not know if it arises for any pair $(g, n)$ with $g \geqslant 1$ 
and $2g + n \geqslant 5$.

\smallskip
A proof of Theorem~\ref{thm.main} is outlined in Section~\ref{sect.proof} and completed in Sections~\ref{sect.beta} and \ref{sect.alpha}. Our arguments rely on a theorem of B.~Fantechi and A.~Massarenti~\cite{fm} 
describing the automorphism group of $\moduli{g}{n}$; see Section~\ref{sect.massarenti}.

\section{Preliminaries}
\label{sect.prel}

All algebraic groups in this paper will be assumed to be affine, and all algebraic varieties to be quasi-projective.

\subsection{Twisting}
\label{sect.twisting}

Let $G$ be an algebraic group defined over a field $F$, $X$ be 
an $F$-variety endowed with a (left) $G$-action, and
$P \to \Spec(F)$ be a (right) $G$-torsor. The twisted variety  $^P X$ is defined 
as $^P X := (P \times X)/G$, where $G$ acts on $P \times X$ by $g \colon (p, x) \to (p \cdot g^{-1}, g \cdot x)$.
Here $P \times X$ is, in fact, a $G$-torsor over $(P \times X)/G$; in particular, $P \times X \to (P \times X)/G$ 
is a geometric quotient. 
A $G$-equivariant morphism of $F$-varieties $f \colon X \to Y$ 
gives rise to a $G$-equivariant morphism $\id \times f \colon P \times X \to P \times Y$
which descends to an $F$-morphism $^P f \colon ^P X \to ^P Y$. Similarly a $G$-equivariant rational map 
$f \colon X \dasharrow Y$ 
of $F$-varieties induces a rational map $^P f \colon {\, }^P X \dasharrow {\, }^P Y$.
Some basic properties of the twisting operation are summarized in Lemma~\ref{lem.90} below;
see also~\cite[Section 2]{florence} or~\cite[Section 3]{dr}.

\begin{lem} \label{lem.90}
Let $G$ be an algebraic group defined over a field $F$,  $f \colon X \to Y$ and $f' \colon X' \to Y$ be $G$-equivariant morphisms of $F$-varieties,
and $P \to \Spec(F)$ be a $G$-torsor. 

\smallskip
(a) If $f$ is an open (respectively, closed) immersion, then so is ${\, }^P\! f$.

\smallskip
(b) If $f$ is a dominant morphism (respectively, an isomorphism or a birational isomorphism), 
then so is ${\, }^P \! f$.

\smallskip
(c) If $f$ is a vector bundle of rank $r$, then so is ${\, }^P \! f$. In particular,
${\, }^P X$ is rational over ${\, }^P Y$.

\smallskip
(d) ${\, }^P (X \times_Y X')$ is isomorphic to
${\, }^P X \times_{{\, }^P Y} {\, }^P (X')$ over ${\, }^P Y$. 

\smallskip
(e) Moreover, if $f$ and $f'$ are vector bundles, then ${\, }^P (X \times_Y X')$ and
${\, }^P X \times_{{\, }^P Y} {\, }^P (X')$ are isomorphic as vector bundles over ${\, }^P Y$.

\smallskip
(f) If $f$ is a vector bundle of rank $r$, then
the twisted Grassmannian bundle ${\, }^P  \Gr(m, X) \simeq \Gr(m, {\, }^P X)$ is rational over 
${\, }^P Y$ for any $1 \leqslant m \leqslant r - 1$. In particular, 
${\, }^P \bbP(X) \simeq \bbP({\, }^P X)$ is rational over ${\, }^P Y$.
\end{lem}

Here when we say that $f$ is a vector bundle, we are assuming that $G$ acts on $X$ by vector bundle automorphisms (and similarly for $f'$).
That is, for any $g \in G$ and $y \in Y$, $g$ restricts to a linear map between the fibers $f^{-1}(y)$ and $f^{-1}(g(y))$.

\begin{proof} For a proof of (a) and (b), see~\cite[Corollary 3.4]{dr}. 

\smallskip
(c) The first assertion is a consequence of Hilbert's Theorem 90. 
The second assertion follows from the first, since the vector bundle
${\, }^P f \colon {\, }^P X \to {\, }^P Y$ becomes trivial after passing to
some dense Zariski open subset of ${\, }^P Y$.

\smallskip
(d) The morphism $\phi \colon P \times (X \times_Y X') \to (P \times X) \times_Y (P \times X')$ over $Y$ given by
$(p, x, x') \mapsto \big( (p, x), (p, x') \big)$ descends to
a morphism $\overline{\phi} \colon {\, }^P (X \times_Y X') \to {\, }^P X \times_{{\, }^P Y} {\, }^P (X')$ over ${\, }^P Y$.
Here the unmarked direct products are assumed to be over $\Spec(F)$. To show that $\overline{\phi}$ is an isomorphism, we may pass to a splitting field $F'/F$ for $P$. 
Over $F'$, the $G$-torsor $P \to \Spec(F)$ becomes split, i.e., $P_{F'} \simeq G_{F'}$. Thus over $F'$, the morphism $P \times X \simeq G \times X \to X$ given by 
$(g, x) \mapsto g \cdot x$ is a $G$-torsor. This yields a natural isomorphism between ${\, }^P X$ and $X$. 
Identifying ${\, }^P (X')$ with $X'$, ${\, }^P Y$ with $Y$, and ${\, }^P (X \times_Y X')$ with $X \times_Y X'$ in a similar manner, we see that over $F'$, $\overline{\phi}$ becomes the identity map $X \times_Y X' \to X \times_Y X'$. Hence, $\overline{\phi}$ is an isomorphism over $F$.

\smallskip
(e) To check that $\overline{\phi}$ is an isomorphism of vector bundles over ${\, }^P Y$, we may, once again, pass to a splitting field $F'/F$ for $P$.
In the proof of part (d), we identified ${\, }^P (X')$ with $X'$ and ${\, }^P (X \times_Y X')$ with $X \times_Y X'$ after passing to $F'$.
Now we observe that these identifications are, in fact, isomorphisms of vector bundles over ${\, }^P Y$ (which we identified with $Y$). 
Modulo these identifications, $\overline{\phi}$ is 
the identity map $X \times_Y X' \to X \times_Y X'$, and part (e) follows.

\smallskip
(f) The second assertion follows from the first by setting $m = 1$.

Rationality of $\Gr(m, {\, }^P X)$  over ${\, }^P Y$ follows from the fact that any vector bundle, 
and in particular the vector bundle ${\, }^P f \colon {\, }^P X \to {\, }^P Y$, is locally trivial 
in the Zariski topology. 

To show that ${\, }^P \Gr(m, X)$ is isomorphic to  $\Gr(m, {\, }^P X)$ over ${\, }^P Y$, recall that
$\Gr(m, X)$ is the quotient of the dense open subset $(X^m)_0$ of the $n$-fold fibered product 
$X^m = X \times_Y \dots \times_Y X$ consisting of linearly independent $m$-tuples by the group 
$\GL_m$ (over $Y$). To construct ${\, }^P \Gr(m, X)$, we proceed as follows. First take the
quotient of the product $P \times (X^m)_0$ by the action of 
$\GL_m$. This action is trivial on the first factor, so we
obtain $P \times \Gr(m, X)$. Now take the quotient of $P \times \Gr(m, X)$ by $G$ to arrive at ${\, }^P \Gr(m, X)$.

To construct $\Gr(m, {\, }^P X)$, we also start with $P \times (X^m)_0$ and take the quotients by the same groups, but
in reverse order. First we take the quotient of $P \times (X^m)_0$
by $G$ to obtain ${\, }^P ((X^m)_0) \simeq (({\, }^P X)^m)_0$ (see parts (d) and (e)); the quotient of $(({\, }^P X)^m)_0$
by $\GL_m$ is $\Gr(m, {\, }^P X)$. Since the actions of $\GL_m$ and $G$ on $P \times_F (X^m)_0$ 
commute, we conclude that ${\, }^P \Gr(m, X)$ and $\Gr(m, {\, }^P X)$ are isomorphic over ${\, }^P Y$.
\end{proof}

The $F$-forms of a variety $X$ are in a natural bijective correspondence with $H^1(F, \Aut(X))$.
Here $\Aut(X)$ is a functor which associates to a scheme $S/F$ the abstract group $\Aut(X_S)$. 
In general this functor is not representable by an algebraic group defined over $F$.
If it is, one usually says that $\Aut(X)$ is an algebraic group. In this case the bijective correspondence between $H^1(F, \Aut(X))$ 
(which may be viewed as a set of $\Aut(X)$-torsors $P \to \Spec(F)$) and the set of $F$-forms of $X$ (up to $F$-isomorphism)
can be described explicitly as follows. An $\Aut(X)$-torsor $P \to \Spec(F)$ 
corresponds to the twisted variety $^P X$, and a twisted form $Y$ of $X$
corresponds to the isomorphism scheme $P = \operatorname{Isom}_F(X, Y)$, which is naturally 
an $\Aut(X)$-torsor over $\Spec(F)$; see~\cite[Section III.1.3]{serre-gc}, ~\cite[Section 11.3]{springer}.

\subsection{\'Etale algebras}
\label{sect.etale-algebra}

An \'etale algebra $A/F$ is a commutative $F$-algebra of the form
$F_1 \times \dots \times F_r$, where each $F_i$ is a finite separable 
field extension of $F$. $n$-dimensional \'etale algebras over $F$ are
$F$-forms of the split \'etale algebra $A = F \times \dots \times F$ ($n$ times).
The automorphism group of this split algebra is the symmetric group $\Sym_n$,
permuting  the $n$ factors of $F$. Thus $n$-dimensional \'etale algebras over $F$
are in a natural bijective correspondence with the Galois cohomology set $H^1(F, \Sym_n)$;
see, e.g., Examples 2.1 and 3.2 in~\cite{gms}.

\subsection{Weil restriction}
\label{sect.weil}

Let $A$ be an \'etale algebra over $F$ and $X \to \Spec(A)$ be 
a variety defined over $A$. The Weil restriction (or Weil transfer)
of $X$ to $F$ is, by definition, an $F$-variety $R_{A/F}(X)$ satisfying
\begin{equation}
\label{e.weil}
\Mor_{F} (Y, R_{A/F}(X)) \simeq \Mor_{A}(Y_A, X)  , \end{equation}
where $Y_A := Y \times_{\Spec(F)} \Spec(A)$, $\Mor_F(Y, Z)$ denotes the set of
F-morphisms $Y \to Z$, and $\simeq$ denotes an isomorphism of functors (in $Y$).
For generalities on this notion we refer the reader to~\cite[Section 7.6]{blr}.
For a brief summary, see~\cite[Section 2]{karpenko}.
In particular, it is shown in~\cite[Theorem 4]{blr} that if $X$ is quasi-projective 
over $A$, then $R_{A/F}(X)$ exists. Note that uniqueness of $R_{A/F}(X)$ follows 
from~\eqref{e.weil} by Yoneda's lemma.

The following properties of Weil restriction will be helpful in the sequel.

\begin{lem} \label{lem.weil}  Let $A/F$ be an \'etale algebra and
$X$ be a (quasi-projective) variety defined over $A$. 


\smallskip
(a) Let $V$ be a free $A$-module of finite rank, and $X = \bbA_A(V)$ 
be the associated affine space. Then $R_{A/F}(X) = \bbA_F(V)$, where we
view $V$ as an $F$-vector space.

\smallskip
(b) If $X$ and $Y$ are birationally isomorphic over $A$, then $R_{A/F}(X)$
and $R_{A/F}(Y)$ are birationally isomorphic over $F$.

\smallskip
(c) If $X$ is a rational variety over $A$, then $R_{A/F}(X)$ is
rational over $F$.
\end{lem}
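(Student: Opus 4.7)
The plan is to exploit the defining universal property~\eqref{e.weil} and Yoneda's lemma throughout, combined with the ``splitting principle'' that after base change to a separable closure $F^{sep}/F$ the \'etale algebra $A$ splits as $A \otimes_F F^{sep} \simeq (F^{sep})^n$ (with $n = \dim_F A$), and correspondingly $R_{A/F}(X) \otimes_F F^{sep}$ becomes the product of the $n$ geometric conjugates of $X$. This splitting will handle geometric properties (such as density) that are not directly visible from the functor of points.

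Part (a) is immediate from the universal property: when $X$ is a group object over $A$, the functor $Y \mapsto \Mor_A(Y_A, X)$ takes values in groups, so~\eqref{e.weil} transports this group structure to $R_{A/F}(X)$ via Yoneda. For (b), one unwinds the definitions: for any $F$-scheme $Y$,
\[
\Mor_A(Y_A, \bbA_A(V)) \;\simeq\; V \otimes_A H^0(Y_A, \calO_{Y_A}) \;\simeq\; V \otimes_F H^0(Y, \calO_Y) \;\simeq\; \Mor_F(Y, \bbA_F(V)),
\]
where the middle isomorphism uses the flat base change identity $H^0(Y_A, \calO_{Y_A}) \simeq H^0(Y, \calO_Y) \otimes_F A$ together with the fact that $V$ is already an $A$-module. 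By Yoneda we obtain $R_{A/F}(\bbA_A(V)) \simeq \bbA_F(V)$. Part (d) then follows formally from (b) and (c): an $A$-birational isomorphism $X \sim_A \bbA_A^m$ yields $R_{A/F}(X) \sim_F R_{A/F}(\bbA_A^m) \simeq \bbA_F^{m\dim_F A}$.

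The main obstacle is part (c), because rational maps are not directly encoded in the functor of points. The key auxiliary statement I would establish first is: \emph{if $U \subseteq X$ is a dense open subvariety, then $R_{A/F}(U) \subseteq R_{A/F}(X)$ is a dense open subvariety.} Openness follows from the universal property: a morphism $Y \to R_{A/F}(X)$ factors through $R_{A/F}(U)$ precisely when the corresponding morphism $Y_A \to X$ factors through $U$, which is a Zariski-open condition on $Y$. For density I would pass to $F^{sep}$, where $R_{A/F}(U) \otimes F^{sep}$ becomes a product of $n$ dense open subsets inside the product of $n$ copies of $X \otimes F^{sep}$; density then descends along the faithfully flat map $\Spec F^{sep} \to \Spec F$. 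Granting this lemma, part (c) is immediate: a birational $A$-isomorphism $X \dashrightarrow Y$ restricts to an $A$-isomorphism $U \stackrel{\sim}{\to} V$ between dense opens, and Weil restriction functorially carries this to an $F$-isomorphism between the dense opens $R_{A/F}(U) \subseteq R_{A/F}(X)$ and $R_{A/F}(V) \subseteq R_{A/F}(Y)$.
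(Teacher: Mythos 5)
Your proposal is correct and follows essentially the same route as the paper's proof: transfer the group-object data functorially for (a), unwind the universal property for (b), pass to a common dense open and use that Weil restriction preserves open immersions for (c), and combine (b) with (c) for (d). The main difference is cosmetic: the paper simply cites the fact that Weil restriction commutes with open immersions, whereas you sketch a proof of it and also verify density of $R_{A/F}(U)$ by splitting over $F^{sep}$ (a point the paper leaves implicit).
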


\begin{proof} 
(a) follows directly from~\eqref{e.weil}. For details, see \cite[Lemma 1.2]{karpenko}.

\smallskip
(b) Since $X$ and $Y$ are birationally isomorphic, there exists a variety $U$ defined over $A$ and open immersions
$i \colon U \hookrightarrow X$ and $j \colon U \hookrightarrow Y$. After replacing $U$ by 
an open subvariety, we may assume that $U$ is quasi-projective (we may even assume that $U$ is affine). 
Since Weil restriction commutes with open immersions, $i$ and $j$ induce open immersions of 
$R_{A/F}(U)$ into $R_{A/F}(X)$ and $R_{A/F}(Y)$, respectively, and part (b) follows.

\smallskip
(c) By our assumption, $X$ is birationally isomorphic to $Y = \bbA^d$ over $A$, where $d$ is the dimension of $X$. 
By part (b), $R_{A/F}(X)$ and $R_{A/F}(Y)$ are birationally isomorphic over $F$, and by part (a),
$R_{A/F}(Y)$ is an affine space over $F$. 
\end{proof}

In the special case where $X$ is defined over $F$, the Weil transfer
$R_{A/F}(X_A)$ can be explicitly described as follows. The symmetric 
group $\Sym_n$ acts on the $n$-fold direct product $X^n$ by permuting 
the factors. If $P \to \Spec(F)$ is a $\Sym_n$-torsor, and $A/F$ is 
the \'etale algebra of degree $n$ representing the class of $P$ in 
$H^1(F, \Sym_n)$, then $R_{A/F}(X_A) = {\, }^P (X^n)$; see, e.g.,
\cite[Proposition 3.2]{dr}.

\subsection{Automorphism of marked curves}
We shall need the following well-known result in the sequel; see,
e.g., in~\cite[Corollary IV.4.7]{hartshorne} for $g = 1$ 
and \cite[Exercise V.1.11]{hartshorne} for $g \geqslant 2$.

\begin{prop} \label{prop.stable}
Suppose $2g + n \geqslant 5$. Then $\Aut(C, p_1, \dots, p_n) = \{ 1 \}$
for a general point $(C, p_1, \dots, p_n)$ of $\moduli{g}{n}$
(or equivalently, of $M_{g, n}$). 
\qed
\end{prop}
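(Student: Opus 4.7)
My plan is to proceed by case analysis on $g$, exploiting the fact that for a general smooth curve $C$ of genus $g \geqslant 1$ the group $\Aut(C)$ is well understood, and that generic marked points then break any residual symmetry. Before dividing into cases, I would note that the automorphism scheme of the universal stable $n$-pointed curve of genus $g$ is finite (in fact unramified) over $\moduli{g}{n}$ as soon as $2g+n \geqslant 3$, and in particular the locus in $\moduli{g}{n}$ where $\Aut(C,p_1,\dots,p_n) \neq \{1\}$ is closed. It therefore suffices to exhibit \emph{one} pointed curve with trivial automorphism group, or equivalently to argue at the generic point.

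For $g \geqslant 3$ this is essentially automatic: a classical result (see, e.g., the Hartshorne exercise cited in the statement) shows that a general smooth projective curve of genus $g \geqslant 3$ already has trivial automorphism group, so $\Aut(C, p_1, \dots, p_n) = \{1\}$ for a general $(C, p_1, \dots, p_n)$, regardless of how the $p_i$ are placed.

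For $g = 2$ and $n \geqslant 1$, a general smooth genus $2$ curve $C$ has $\Aut(C) = \Z/2\Z$, generated by the hyperelliptic involution $\iota$, whose fixed locus consists of the six Weierstrass points. For a generic $p_1 \in C$ we have $\iota(p_1) \neq p_1$, so any automorphism of $C$ fixing $p_1$ must be the identity; hence $\Aut(C, p_1, \dots, p_n) = \{1\}$. For $g = 1$ and $n \geqslant 3$, I would choose an elliptic curve $E$ with $j$-invariant distinct from $0$ and $1728$ and take $p_1 = 0_E$; then $\Aut(E, p_1) = \{\pm 1\}$, and $[-1]$ fixes only the four $2$-torsion points. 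For a general $p_2 \in E$ one has $-p_2 \neq p_2$, so already $\Aut(E, p_1, p_2) = \{1\}$, and a fortiori $\Aut(E, p_1, \dots, p_n) = \{1\}$.

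The only nontrivial input is the structural fact used in the case $g \geqslant 3$, namely that the generic smooth curve of genus $\geqslant 3$ has trivial automorphism group; the rest of the argument is a direct computation on a single curve in each genus. I expect this structural input to be the main (though entirely standard) obstacle, which is precisely why the statement of the proposition refers to a reference for its proof rather than giving one.
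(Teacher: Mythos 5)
Your proof is correct, and it matches what the paper intends: the paper supplies no argument of its own, attaching a \qed to the proposition and pointing the reader to Hartshorne (Corollary IV.4.7 for $g=1$, Exercise V.1.11 for $g \geqslant 2$), and your case-by-case analysis is exactly the standard argument those references support. The one point worth flagging is that for $g \geqslant 3$ with $n=0$ (allowed by $2g+n\geqslant 5$, though never used in the paper) you correctly appeal to generic automorphism-freeness of curves of genus $\geqslant 3$ rather than to mere finiteness of $\Aut(C)$, which is all the Hurwitz-type exercise gives directly and which would not suffice in the absence of a marked point.
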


Note that the inequality $2g + n \geqslant 5$ is satisfied for every pair of integers $(g, n)$ appearing in Theorem~\ref{thm.main}.

\subsection{Automorphisms and forms of $\moduli{g}{n}$}
\label{sect.massarenti}

The following theorem is the starting point of our investigation.

\begin{thm} \label{thm.massarenti} {\rm (A.~Massarenti~\cite{massarenti}, B.~Fantechi and
A.~Massarenti~\cite{fm})}
Let $F$ be a field of characteristic $\neq 2$. If $g, n \geqslant 1$, $(g, n) \neq (2, 1)$
and $2g + n \geqslant 5$, then the natural embedding
$\Sym_n \to \Aut_F(\moduli{g}{n})$ is an isomorphism. \qed
\end{thm}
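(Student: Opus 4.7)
My plan is to reduce to the case of an algebraically closed ground field, and then to exploit the combinatorial structure of the boundary of $\moduli{g}{n}$. The natural embedding $\Sym_n \hookrightarrow \Aut_F(\moduli{g}{n})$ is a morphism of group schemes defined over the prime field, and the $\Sym_n$-action is faithful under our hypothesis $2g+n \geqslant 5$ by Proposition~\ref{prop.stable}. Since $\moduli{g}{n}$ is proper, $\Aut_F(\moduli{g}{n})$ is representable by an $F$-group scheme. By faithfully flat descent, it then suffices to verify that the embedding is an isomorphism after base change to an algebraic closure $\overline{F}$: once we know that $\Aut(\moduli{g}{n}_{\overline{F}}) = \Sym_n$ as abstract groups, the fact that the right-hand side is finite and reduced forces the embedding to descend to an isomorphism over $F$.

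Working over $\overline{F}$, let $\varphi \in \Aut(\moduli{g}{n}_{\overline{F}})$. The first step is to show that $\varphi$ preserves the boundary $\partial := \moduli{g}{n} \setminus M_{g,n}$; this can be extracted from an intrinsic characterization of $M_{g,n}$, for instance as the locus where the moduli stack is the classifying stack of a smooth curve, or as the complement of the vanishing locus of the discriminant of the universal family. Consequently $\varphi$ permutes the irreducible boundary divisors $\delta_{\mathrm{irr}}$ and $\delta_{i,S}$ (indexed by $0 \leqslant i \leqslant g$ and $S \subset \{1,\dots,n\}$, subject to the standard stability conditions). The central step is to produce a $\sigma \in \Sym_n$ such that $\sigma \circ \varphi$ fixes every boundary divisor setwise. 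The divisors $\delta_{0,\{i,j\}}$, whose generic points parameterize curves obtained by attaching a smooth rational tail carrying exactly the two marked points $p_i$ and $p_j$, can be singled out intrinsically from the incidence lattice of $\partial$ (they are precisely the boundary components contracted by a single forgetful map). The action of $\varphi$ on them defines a permutation of the two-element subsets of $\{1,\dots,n\}$, which must be induced by a unique $\sigma \in \Sym_n$; one then checks that this $\sigma$ is compatible with $\varphi$ on all other boundary divisors as well.

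After replacing $\varphi$ by $\sigma^{-1}\circ\varphi$, we may assume $\varphi$ fixes each boundary divisor. To conclude $\varphi = \id$, I would induct on $n$ using the forgetful morphisms $\pi_j \colon \moduli{g}{n} \to \moduli{g}{n-1}$: since $\varphi$ preserves the divisors contracted by $\pi_j$, it descends to an automorphism $\overline{\varphi}$ of $\moduli{g}{n-1}$, trivial by the induction hypothesis; then $\varphi$ restricts on the generic fiber of $\pi_j$ to an automorphism of a single stable marked curve, which is trivial by Proposition~\ref{prop.stable}. The base of the induction reduces to $\moduli{g}{0}$ and is handled by the classical rigidity results of Royden for $g \geqslant 2$. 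The exclusion of $(g,n) = (2,1)$ is genuine: a general genus-$2$ curve carries a hyperelliptic involution that sends the marked point to its hyperelliptic conjugate, producing a nontrivial automorphism of $\moduli{2}{1}$ outside $\Sym_1 = \{1\}$. The main obstacle is the combinatorial extraction of $\sigma$ from $\varphi$'s action on the boundary divisors: this requires a careful analysis of the incidence structure of the boundary stratification, and is the technical heart of the Fantechi--Massarenti argument.
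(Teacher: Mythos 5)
This theorem is stated in the paper as a citation of Massarenti and Fantechi--Massarenti; the paper gives no proof of its own, so there is nothing internal to compare against. Your sketch does track the broad shape of the Fantechi--Massarenti argument (reduce to $\overline F$, show automorphisms preserve the boundary, read off a permutation from the combinatorics of boundary divisors, correct by it, then descend through forgetful maps), and your identification of the ``technical heart'' is accurate. That said, two of your specific assertions are wrong and one is too casual.

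First, your explanation of the exclusion of $(g,n)=(2,1)$ is incorrect. The map $(C,p)\mapsto (C,\iota(p))$, where $\iota$ is the hyperelliptic involution, is the \emph{identity} on the coarse space $\moduli{2}{1}$: $\iota$ is an isomorphism of $C$ carrying $p$ to $\iota(p)$, so $(C,p)\cong(C,\iota(p))$ as marked curves. In fact Remark~\ref{rem.small} in this very paper records that $\Aut(\moduli{2}{1})$ is trivial (in characteristic $0$, quoting~\cite{fm}), which agrees with $\Sym_1=\{1\}$, so the exclusion of $(2,1)$ from the statement of Theorem~\ref{thm.massarenti} is not because the conclusion fails but because the Fantechi--Massarenti proof handles that pair by a separate argument. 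Second, the assertion that the induced permutation of the two-element subsets of $\{1,\dots,n\}$ ``must be induced by a unique $\sigma\in\Sym_n$'' is false as stated: $\Sym_n$ is far from all of $\Sym\bigl(\binom{[n]}{2}\bigr)$ (for instance the Johnson graph $J(4,2)$ already has extra automorphisms), and one must first show that the permutation coming from $\varphi$ respects the incidence structure (intersections $\delta_{0,\{i,j\}}\cap\delta_{0,\{k,l\}}$, compatibility with the $\delta_{0,S}$ for larger $S$, etc.) before one can conclude it lies in the image of $\Sym_n$. You flag this as needing ``checking,'' but the logic should be reversed: it is precisely this check that licenses the existence of $\sigma$. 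Finally, Royden's rigidity is a complex-analytic theorem; since the statement you are proving is over an arbitrary field of characteristic $\neq 2$, the base of your induction cannot rest on Royden in the way you indicate. The Fantechi--Massarenti proof is purely algebraic for exactly this reason, relying on properties of extremal rays, fiber-type morphisms and the boundary stratification rather than on Teichm\"uller-theoretic rigidity.
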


Using the bijective correspondence between $F$-forms of $X$ and
$\Aut(X)$-torsors $P \to \Spec(F)$ described at the end of 
Section~\ref{sect.twisting}, we obtain the following.

\begin{cor} \label{cor.forms} For $F, g, n$ as in Theorem~\ref{thm.massarenti},
every $F$-form of $\moduli{g}{n}$ is $F$-isomorphic to
${\,}^{P} \moduli{g}{n}$ for some $\Sym_n$-torsor $P \to \Spec(F)$.
\qed
\end{cor}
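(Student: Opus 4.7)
The plan is to combine Theorem \ref{thm.massarenti} with the general form-versus-torsor dictionary recalled at the end of Section \ref{sect.twisting}. First, I would invoke the fact that $\moduli{g}{n}$ is a (quasi-)projective $F$-variety whose automorphism functor is representable by an algebraic group over $F$; under the hypotheses $g,n\geqslant 1$, $(g,n)\neq (2,1)$ and $2g+n\geqslant 5$, Theorem \ref{thm.massarenti} identifies this algebraic group with $\Sym_n$ via the natural embedding. This is exactly the setting in which the general correspondence at the end of Section \ref{sect.twisting} is available.

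Next, I would apply that correspondence with $X=\moduli{g}{n}$: the $F$-isomorphism classes of $F$-forms of $\moduli{g}{n}$ are in natural bijection with the pointed set $H^1(F,\Aut(\moduli{g}{n}))=H^1(F,\Sym_n)$, and an explicit bijection is furnished by the twisting operation of Section \ref{sect.twisting}. Concretely, in one direction a $\Sym_n$-torsor $P\to\Spec(F)$ is sent to the twisted variety ${\,}^P\moduli{g}{n}$, and in the other direction an $F$-form $Y$ of $\moduli{g}{n}$ is sent to the isomorphism scheme $\operatorname{Isom}_F(\moduli{g}{n},Y)$, which carries a natural (right) $\Sym_n$-action making it a torsor. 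Thus every $F$-form of $\moduli{g}{n}$ is $F$-isomorphic to ${\,}^P\moduli{g}{n}$ for the corresponding $P$, which is exactly the statement to be proved.

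There is essentially no obstacle; the content of the corollary is entirely the translation of Theorem \ref{thm.massarenti} through the standard Galois-cohomological dictionary. The one point I would take care to check is that Theorem \ref{thm.massarenti} is being used as an identification of group schemes (rather than merely on $F$-points), since the form-torsor correspondence requires knowing $\Aut(\moduli{g}{n})$ as an algebraic group over $F$; but this is precisely what is asserted by the cited theorem via the natural embedding of $\Sym_n$ as the constant group scheme into $\Aut_F(\moduli{g}{n})$.
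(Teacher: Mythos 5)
Your proposal matches the paper's argument exactly: the paper derives the corollary by combining Theorem~\ref{thm.massarenti} with the bijection between $F$-forms of $X$ and $\Aut(X)$-torsors recalled at the end of Section~\ref{sect.twisting}, which is precisely what you do. Your remark about needing $\Aut(\moduli{g}{n})$ as a group scheme (not merely on $F$-points) is a reasonable point of care, already implicit in the paper's statement that the natural embedding $\Sym_n\to\Aut_F(\moduli{g}{n})$ is an isomorphism.
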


\begin{remark} \label{rem.small}
Theorem~\ref{thm.main} also holds in the following cases.

\smallskip
(a) $g= 2$ and $n = 1$, and $\Char(F) = 0$,

\smallskip
(b) $g = 1$ and $n = 2$ and $\Char(F) \neq 2$ or $3$.

\smallskip
\noindent
In case (a), $\moduli{2}{1}$ has no non-trivial automorphisms 
by~\cite[Theorem 1]{fm} and hence, no non-split forms. On the other hand,
the split form of $\moduli{2}{1}$ is known to be rational; see~\cite{cf}. 

In case (b), the automorphism group $G$ of $\moduli{1}{2}$ is non-trivial; 
however, it is special; see~\cite[Proposition 2.4]{fm}. In other words,
every $G$-torsor over a field is split.
As a consequence, $\moduli{1}{2}$ has no non-split forms 
(see~\cite[Remark 6.4]{dr}) and the split 
form of $\moduli{1}{2}$ is rational (see~\cite{cf}).
\end{remark}

\begin{remark} \label{rem.lin}
We do not know if $\moduli{g}{n}$ can be replaced by $M_{g, n}$ 
in the statement of Theorem~\ref{thm.massarenti}. 
If so, then $\moduli{g}{n}$ can also be replaced 
by $M_{g, n}$ in the statements of Theorems~\ref{thm.main}. 
The proof remains unchanged.
\end{remark}

\section{Proof of Theorem~\ref{thm.main}: the overall strategy} 
\label{sect.proof}

Let $(C, p_1, \dots, p_n)$ be a point of $M_{g, n}$.

\medskip
{\bf Case I.} We define a vector space $V$ of dimension $d$ as follows.

\smallskip
\begin{itemize}
\item If $g = 3, 4$ or $5$, then $V = H^0(C, \omega_C)^*$, where $\omega_C$ is the canonical line bundle.
Here $d = g$.

\smallskip
\item
If $g = 1$ and $n = 3$ or $4$, then $V = H^0(C, \mathcal{O}_C(p_1 + \ldots + p_n))^*$.
Here $d = n$. 

\smallskip
\item
If $g = 2$ and $n = 2$, then $V = H^0(C, \omega_C(p_1 + p_2))^*$. Here $d = 3$.
\end{itemize}

\smallskip
{\bf Case II.}
\begin{itemize}
\item
If $g = 2$ and $n = 3$, then in place of $V$ we define two vector spaces, $V_1 = H^0(C, \omega_C)^*$ and
$V_2 = H^0(C, \mathcal{O}_C(p_1 + p_2 + p_3))^*$. Here $\dim(V_1) = \dim(V_2) = 2$.
\end{itemize}

\begin{remark} \label{rem.vector-bundle}
In Case I, for $(C, p_1, \dots, p_n)$ in a suitably defined open subset $(M_{g, n})_0$ of $M_{g, n}$,
$V$ is the fiber of a vector bundle $E \to (M_{g, n})_0$ obtained via push-forward from a vector bundle over 
the universal curve. In Case II the same is true of both $V_1$ and $V_2$. After replacing 
$(M_{g, n})_0$ by a dense open subset, we may assume without loss of generality that (i) $(M_{g, n})_0$ is
$\Sym_n$-invariant and (ii) $\Aut(C, p_1, \ldots, p_n) = 1$
for any $(C, p_1, \ldots, p_n) \in (M_{g, n})_0$; see Proposition~\ref{prop.stable}.
\end{remark}

In Case I, set 
\[ \text{$X = \{ (C, p_1, \ldots, p_n, B) \in (M_{g, n})_0 \; | \; B$ is a basis of $V$, up to proportionality$\}$.} \] 
Here two bases $B = (v_1, \dots, v_d)$ and $B' = (v_1', \dots, v_d')$ are called proportional if there exists
a $0 \neq c \in k$ such that $v_i' = c v_i$ for every $i = 1, \dots, d$.

In Case II, the vector space $V$ in the definition of $X$ should be replaced by a pair of $2$-dimensional
vector spaces $V = (V_1, V_2)$ and the basis $B$ by a pair $B = (B_1, B_2)$, where $B_1$ is a basis of $V_1$ and $B_2$ 
is a basis of $V_2$. We identify two such pairs, $B = (B_1, B_2)$ and $B' = (B_1', B_2')$,  
if $B_1$ is proportional to $B_1'$ and $B_2$ is proportional to $B_2'$.

In Case I, choosing a basis in $V$ gives rise to the map $f_B \colon C \to \bbP^{d -1}$. Two bases, $B$ and $B'$,
are proportional if and only if $f_B = f_{B'}$. In Case II,
we obtain two maps, $f_{B_1}, f_{B_2} \colon C \to \bbP^1$, which can be combined into a single morphism
$f_B = f_{B_1} \times f_{B_2} \colon C \to \bbP^1 \times \bbP^1$. Once again, we
identify $B = (B_1, B_2)$ and $B' = (B_1', B_2')$ if and only if $f_B = f_{B'}$.

In each case we will consider a diagram of the form
 \begin{equation} \label{e.diagram-split} 
 \xymatrix{    
               &                             & X \ar@{->}[dl]_{\alpha} \ar@{-->}[dr]^{\beta}              &               &  \\
M_{g, n}   &  (M_{g, n})_0 \ar@{_{(}->}[l]^{\text{open}}  &                                              & Y.   &  } 
\end{equation}
Here $\alpha$ is the natural projection $(C, p_1, \ldots, p_n, B) \to (C, p_1, \ldots, p_n)$ which ``forgets" the basis $B$. 
The second projection $\beta$ ``forgets" the curve $C$, and maps $(C, p_1, \dots, p_n, B)$ to a suitable configuration space $Y$
for the remaining data. Specifically, we define $Y$ and $\beta$ as follows.

\medskip
{\bf Case I.}

\smallskip
\begin{itemize}
\item If $g = 3, 4$ or $5$, then $f_B \colon C \to \bbP^{g-1}$ is the canonical embedding. We define $Y = (\bbP^{g-1})^n$, and 
$\beta(C, p_1, \ldots, p_n, B) = (f_B(p_1), \ldots, f_B(p_n)) \in Y$.

\smallskip
\item
Let $g = 1$ and $n = 3$ or $4$. The constant function $1 \in H^0(C, \mathcal{O}_C(p_1 + \ldots + p_n))$ cuts out 
a hyperplane $L \subset \bbP^{n-1}$ passing through $f_B(p_1), \ldots, f_B(p_n)$. We set $Y \subset ((\bbP^{n-1})^*)^n$ 
to be the locally closed subvariety consisting of $n$-tuples $p_1, \dots, p_n$ 
such that $p_1, \ldots, p_n$ are linearly dependent (i.e., lie in a hyperplane) in $\bbP^{n-1}$ but any $n-1$ of them
are linearly independent and define
$\beta(C, p_1, \dots, p_n) = (f_B(p_1), \ldots, f_B(p_n))$. 

\smallskip
\item
If $g = 2$ and $n = 2$, then for $p_1, p_2$ in general position on $C$, the image of $f_B$ in $\bbP^2$ is a quartic curve $C'$
with a node at $p = f_B(p_1) = f_B(p_2)$, and $f_B \colon C \to C'$ is the normalization map; see~\cite[Example 5.15]{harris}. 
Moreover, $C'$ has two tangent lines at $p$, $L_1$ and $L_2$, which correspond to $p_1$ and $p_2$ under $f_B$. We thus define
$Y$ as the open subvariety of $(\bbP^2)^* \times (\bbP^2)^*$ parametrizing pairs of distinct lines
and $\beta(C, p_1, p_2, B) = (L_1, L_2)$.
\end{itemize}

\medskip
Case II.

\begin{itemize}
\smallskip
\item
Here $g = 2$ and $n = 3$, and the maps $f_{B_1}$ and $f_{B_2} \colon C \to \bbP^1$ are of degree
$2$ and $3$, respectively; see \cite[Examples 5.11 and 5.13]{harris}.
For $p_1, p_2, p_3$ in general position, $C':= f_B(C)$
is a curve of bidegree $(3, 2)$ in $\bbP^1 \times \bbP^1$,
$f_B = f_{B_1} \times f_{B_2}$ is an isomorphism between $C$ and $C'$, 
and $f_{B_2}(p_1) = f_{B_2}(p_2) = f_{B_2}(p_3)$ in $\bbP^1$. We set
$Y$ to be the open subvariety of $(\bbP^1)^3 \times \bbP^1$ consisting of elements of the form
$\big( (a_1, a_2, a_3), b \big)$, where $a_1, a_2, a_3 \in \bbP^1$ are distinct  
and $\beta(C, p_1, p_2, p_3, B) = \big( (f_{B_1}(p_1), f_{B_1}(p_2), f_{B_1}(p_3)), f_{B_2}(p_1) \big)$.
\end{itemize}

\smallskip
By Corollary~\ref{cor.forms} it suffices to show that the twisted variety $^P \, M_{g, n}$ is stably rational over $F$ for every
$\Sym_n$-torsor $P \to \Spec(F)$. Here $1 \leqslant g \leqslant 5$, and $(g, n)$ is one of the pairs appearing in Theorem~\ref{thm.main}.
Twisting the diagram~\eqref{e.diagram-split} by $P$ and applying Lemma~\ref{lem.90}, we obtain the following diagram of twisted varieties.
\begin{equation} \label{e.diagram-twisted} 
 \xymatrix{    
                &                   & { }^P  X \ar@{->}[dl]_{{\, }^P \!  {\Large \alpha}} \ar@{-->}[dr]^{{\,}^P \! {\Large \beta}} &  & \\
  {\, }^P M_{g, n} &  {\, }^P ((M_{g, n})_0)  \ar@{_{(}->}[l]^{\text{open}} &                        & {\, }^P Y. & }   
\end{equation}
In order to complete the proof of Theorem~\ref{thm.main}, we need to establish the following facts for each pair $(g, n)$ 
in Theorem~\ref{thm.main}.

\begin{lem} \label{lem.dominant} The rational map $\beta \colon X \dasharrow Y$ is dominant.
\end{lem}

\begin{lem} \label{lem.alpha} (a) ${\, }^P Y$ is rational over $F$, 

\smallskip
(b) ${\, }^P X$ is rational over ${\, }^P M_{g, n}$,

\smallskip
(c) ${\, }^P X$ is rational over ${\, }^P Y$.
\end{lem}

\section{Proof of Lemma~\ref{lem.dominant}}
\label{sect.beta}

For $g = 3, 4, 5$ we need to show that there is a canonical curve passing through $n$ points $r_1, \dots, r_n$ in general position in $\bbP^{g-1}$.

$g = 3$. Canonical curves of genus $3$ are precisely the smooth quartic curves in $\bbP^2$; 
see~\cite[Example IV.5.2.1]{hartshorne}. Since $\dim \, H^0(\bbP^2, \mathcal{O}(4)) = 15$,
there is a smooth quartic curve passing through $n$ points in $\bbP^2$ in general position 
for any $n \leqslant 14$.

$g = 4$. We will use the fact that a complete intersection of a smooth quadric surface $Q$ and a smooth cubic surface $S$ in $\bbP^3$ is a canonical curve of 
genus $4$; see \cite[Example IV.5.2.2]{hartshorne}.
The dimensions of $H^0(\bbP^3, \mathcal{O}(2))$ and $H^0(\bbP^3, \mathcal{O}(3))$ are $10$ and $20$, respectively.
Hence, as long as $n \leqslant 9$ and $r_1,  \ldots, r_n$ are in general position
in $\bbP^3$, there exist a smooth quadric $Q \subset \bbP^3$ and smooth cubic $S \subset \bbP^3$ 
such that $Q$ and $S$ pass through $r_1, \dots, r_n$ and intersect transversely. The intersection
$Q \cap S$ is then a canonical curve of genus $4$ passing through $r_1, \dots, r_n$.

$g = 5$. Let $n \leqslant 12$. Since $\dim \, H^0(\bbP^4, \mathcal{O}(2)) = 15$, 
for $n$ points $r_1, \dots, r_n$ in general position in $\bbP^4$, 
there exist three linearly independent quadrics, $Q_1$, $Q_2$ and $Q_3$ such that

\smallskip
(i) $Q_1$, $Q_2$ and $Q_3$ pass through $r_1, \dots, r_n$,

\smallskip
(ii) $Q_1$, $Q_2$ and $Q_3$ intersect transversely, and

\smallskip
(iii) $C = Q_1 \cap Q_2 \cap Q_3$ is a smooth curve.

\smallskip
\noindent
By~\cite[Example IV.5.5.3]{hartshorne}, $C$ is a canonical curve of genus $5$. By (i), $C$ passes through $r_1, \dots, r_n$, as desired.

$g = 2$ and $n = 2$. The projection $\beta$ is equivariant with respect to the natural $\GL_3$-action on $X$ and $Y$.
Here $\GL_3$ acts on $(C, p_1, \ldots, p_n, B) \in X$ by linear changes of the basis $B$, leaving $C$ and $p_1, \dots, p_n$
invariant, and on $Y$ via its natural action on $\bbP^2$. Since the $\GL_3$-action on $Y$ is transitive, $\beta$ is dominant.

$(g, n) = (2, 3)$, $(1, 3)$ or $(1, 4)$. Let $G = \GL_2 \times \GL_2$, $\GL_3$ or $\GL_4$, respectively.
In each case, $\beta$ is $G$-equivariant and $G$ acts transitively on $Y$, so the same argument as in the previous case
shows that $\beta$ is dominant.
This completes the proof of Lemma~\ref{lem.dominant}.
\qed

\section{Proof of Lemma~\ref{lem.alpha}}
\label{sect.alpha}

(a) Let $A/F$ be the \'etale algebra associated to the $\Sym_n$-torsor $P \to \Spec(F)$. If $g = 3$, $4$ or $5$, then ${\, }^P Y \simeq R_{A/F}(\bbP^{g-1})$.
If $g = 2$ and $n = 2$, then ${\, }^P Y \simeq R_{A/F}((\bbP^2)^*)$. If $g = 2$ and $n = 3$, then ${\, }^P Y \simeq R_{A/F}(\bbP^{1}) \times \bbP^1$. Here $\simeq$ stands for ``birationally isomorphic over $F$".
In each case ${\, }^P Y$ is rational over $F$ by Lemma~\ref{lem.weil}(c).

If $g = 1$, let $\mathcal{H} \to (\bbP^{n-1})^*$ be the tautological bundle whose fiber over the hyperplane $\{ l = 0 \}$
consists of the points of the affine hyperplane cut out by $l$ in $\bbA^n$. Then $Y$ is $\Sym_n$-equivariantly birationally isomorphic to the $n$-fold fibered product $\bbP(\mathcal{H})^n$ over $(\bbP^{n-1})^*$.
Since $\Sym_n$ acts trivially on $(\bbP^{n-1})^*$, and $(\bbP^{n-1})^*$ is rational over $F$,
it suffices to show that ${\, }^P (\bbP(\mathcal{H})^n)$
is rational over $(\bbP^{n-1})^*$. Choosing projective coordinates $a_1, \dots, a_n$ in
$\bbP^{n-1}$, we can identify the function field $K$ of $(\bbP^{n-1})^*$
with $F(a_i/a_j \, | \, i, j = 1, \ldots, n)$. Over $K$, $\mathcal{H}$ is isomorphic to the $(n-1)$-dimensional
vector subspace on $K^n$ given by $a_1 x_1 + \dots + a_n x_n = 0$ and ${\, }^{P} (\bbP(\mathcal{H})^n)$ is isomorphic to 
$R_{A_K/K} (\bbP(\mathcal{H})_{A_K})$, where $A_K = A \otimes_F K$. 
By Lemma~\ref{lem.weil}(c), $R_{A_K/K} (\bbP(\mathcal{H})_{A_K})$ is rational over $K$. This shows that
${\, }^P (\bbP(\mathcal{H})^n)$ is rational over $(\bbP^{n-1})^*$, as desired.

\smallskip
(b) Case I. Let $E \to (M_{g, n})_0$ be the vector bundle whose fiber over $(C, p_1, \dots, p_n)$ is $V$, as in Remark~\ref{rem.vector-bundle}.
The space of bases in $V$ (up to equivalence) can be identified with a dense
open subset of $\bbP(V^{d})$. Hence, $X$ is $\Sym_n$-equivariantly birationally isomorphic to 
$\bbP(E^d)$ over $(M_{g, n})_0$ and consequently ${\, }^P X$ is birationally isomorphic to ${\, }^P \bbP(E^d)$. On the other hand, by
Lemma~\ref{lem.90}(f), ${\, }^P \bbP(E^d)$ is rational over ${ }^P (M_{g, n})_0$, and by Lemma~\ref{lem.90}(a),  ${ }^P (M_{g, n})_0$ is a
dense open subset of ${ }^P (M_{g, n})$. We conclude that ${\, }^P X \simeq {\, }^P \bbP(E^d)$ 
is rational over ${ }^P M_{g, n}$.

Case II. Now suppose $(g, n) = (2, 3)$. Let $E_1$ and $E_2$ be the rank $2$ vector bundles over $(M_{g, n})_0$ whose fibers 
over $(C, p_1, \dots, p_n)$ are $V_1$ and $V_2$, respectively, as in Remark~\ref{rem.vector-bundle}. 
The space of bases in $V_i$ (up to equivalence) can be identified with a dense
open subset of $\bbP(V_i^{2})$. Hence, $X$ is $\Sym_3$-equivariantly birationally 
isomorphic to a dense open subset of $\bbP(E_1^2) \times_{(M_{g, n})_0} \bbP(E_2^2)$ 
over $(M_{g, n})_0$. By Lemma~\ref{lem.90}(d),  ${\, }^P X$ is birationally isomorphic to 
${\, }^P \bbP(E_1^2) \times_{{\, }^P (M_{g, n})_0} {\, }^P \bbP(E_2^2)$ over 
${ }^P (M_{g, n})_0$. By Lemma~\ref{lem.90}(f), each ${\, }^P \bbP(E_i^2)$ is rational over ${ }^P (M_{g, n})_0$
(or equivalently, over ${ }^P M_{g, n})$. Hence, so is ${\, }^P X \simeq {\, }^P \bbP(E_1^2) \times_{{\, }^P (M_{g, n})_0} {\, }^P \bbP(E_2^2)$.
 
 \smallskip
(c) Recall that in Case I each $(C, p_1, \dots, p_n, B) \in X$ gives rise to a map $f_B \colon C \to \bbP^{d-1}$. 
Here $d = \dim(V)$. In Case II, $f_B$ maps $C$ to $\bbP^1 \times \bbP^1$.  
Now observe that in both cases $f_B$ (and thus $B$, up to equivalence) is uniquely determined by
the image of $(C, p_1, \dots, p_n)$ under $f_B$. Indeed, in each case $f_B$ maps $C$ birationally 
onto its image. If $f_B$ and $f_{B'}$ have the same image, then composing $f_{B'}$ with $f_B^{-1}$, 
we obtain an automorphism of $(C, p_1, \dots, p_n)$. On the other hand, for 
$(C, p_1, \dots, p_n) \in (M_{g, n})_0$, $\Aut(C, p_1, \dots, p_n) = 1$; see Remark~\ref{rem.vector-bundle}.

$g = 3$. Using the above observation we can $\Sym_n$-equivariantly identify $X$ with the space
of tuples $(Q, r_1, \dots, r_n)$, where $Q \subset \bbP^2$ is a smooth curve of degree $4$ and 
$r_1, \dots, r_n$ are $n$ distinct points on $Q$. Let $Y_0 \subset Y = (\bbP^2)^n$ be the dense open $\Sym_n$-invariant 
subvariety consisting of $n$-tuples $(r_1, \ldots, r_n)$ such that (i) $(r_1, \dots, r_n)$ impose $n$ independent conditions 
on quartic curves, (ii) there is a smooth quartic curve passing through $r_1, \dots, r_n$.
Let $W \to Y_0 \subset (\bbP^2)^n$ be the vector bundle whose fiber over $(r_1, \dots, r_n)$ is the space of quartic polynomials 
in 3 variables vanishing at $r_1, \dots, r_n$. Then over $Y_0$, $X$ is $\Sym_n$-equivariantly birationally isomorphic to 
$\bbP(W)$. By Lemma~\ref{lem.90},  ${\, }^P X \simeq {\, }^P \bbP(W)$ is rational over ${\, }^P Y$.

$g = 4$. Recall that $C \subset \bbP^3$ is a canonically embedded curve of genus $4$ if and only if $C$ is a complete intersection of an irreducible quadric surface $Q$ and an irreducible cubic surface $S$ in $\bbP^3$.  Moreover, the quadric $Q$ is uniquely determined by $C$, and the cubic polynomial $s$ which cuts out $S$, is uniquely determined up to replacing $s$ by $s' = c \cdot s + l q$, where $c \in F^*$ is a non-zero constant, $q$ is the quadratic form 
cutting out $Q$, and $l$ is a linear form. Conversely, any irreducible non-singular curve in $\bbP^3$, which is
a complete intersection of an irreducible quadric and an irreducible cubic, is a canonically embedded curve 
of genus $4$; see \cite[Example IV.5.2.2]{hartshorne}. 

Let $Y_0 \subset Y = (\bbP^3)^n$ be the open subset consisting of $n$-tuples of points imposing independent conditions of quadrics and cubics in $\bbP^3$. Let $W$ be the space of $(n+1)$-tuples $(q, r_1, \dots, r_n)$, where $(r_1, \dots, r_n) \in Y_0$ 
and $q \in H^0(\bbP^3, \mathcal{O}(2))$ vanishes at $r_1, \dots, r_n$. The natural projection $W \to Y_0$ given by 
$(q, r_1, \ldots, r_n) \mapsto (r_1, \ldots, r_n)$ is a vector bundle of rank $10 - n$.
The fiber of the projective bundle $\bbP(W)$ over $(r_1, \ldots, r_n) \in Y_0$ parametrizes quadric surfaces $Q \subset \bbP^3$ passing through 
$r_1, \ldots, r_n$. Now let $W'$ be the vector bundle of rank $20-n$ over $\bbP(W)$,
whose fiber over $(Q, r_1, \dots, r_n)$ consists of cubic forms $s \in H^0(\bbP^3, \mathcal{O}(3))$
vanishing at $r_1, \dots, r_n$. Let $W'' \subset W$ be the subbundle, whose fiber over $(Q, r_1, \dots, r_n)$ consists of cubic forms $l \cdot q$, where $q \in H^0(\bbP^3, \mathcal{O}(2))$ cuts out $Q$ and $l$ ranges over $H^0(\bbP^3, \mathcal{O}(1))$. 
Now set $\overline{W} = W'/W''$. A general point $(S, Q, r_1, \ldots, r_n)$ of $\bbP(\overline{W})$ gives rise to a canonical curve $Q \cap S \subset
\bbP^3$ of genus $4$ passing through $r_1, \dots, r_n$.
Thus $X$ is $\Sym_n$-birationally isomorphic to $\bbP(\overline{W})$ over $Y_0$, and we obtain the following diagram of $\Sym_n$-equivariant maps
\[ \xymatrix{X \ar@{-->}[r]^{\sim \quad}  & \bbP(\overline{W}) \ar@{->}[d] \\
             & \bbP(W) \ar@{->}[d]   \\
        Y \ar@{-->}[r]^{\sim \quad} & Y_0 .} \]
Twisting by the $\Sym_n$-torsor $P \to \Spec(F)$, we obtain a diagram
\[ \xymatrix{ {\,}^P X \ar@{-->}[r]^{\sim \quad}  & {\, }^P \bbP(\overline{W}) \ar@{->}[d] \\
             & {\, }^P \bbP(W) \ar@{->}[d]   \\
 {\, }^P Y  \ar@{-->}[r]^{\sim \quad}            & {\, }^P Y_0 .} \]
By Lemma~\ref{lem.90}, ${\, }^P X$ is rational over ${\, }^P \bbP(W)$ and
${\, }^P \bbP(W)$ is rational over over $ {\, }^P Y$.

$g = 5$. Recall that a general canonical curve $C' = f_B(C)$ of genus $5$
is a complete intersection of three quadric hypersurfaces $Q_1$, $Q_2$ and $Q_3$
in $\bbP^4$. Let $q_i \in H^0(\bbP^4, \mathcal{O}(2))$ be a defining equation for $Q_i$.
Then the span of $q_1$, $q_2$ and $q_3$ is uniquely determined by the canonical curve $C'$, because 
$H^0(\bbP^4, \mathcal{I}_{C'}(2))$ is $3$-dimensional. Conversely, a $3$-dimensional subspace of $H^0(\bbP^4, \mathcal{O}(2))$
in general position cuts out a canonical curve of genus $5$ in $\bbP^4$; see~\cite[Example IV.5.5.3]{hartshorne}.

Let $Y_0 \subset Y = (\bbP^4)^n$ be the open subset consisting of $n$-tuples of points imposing independent conditions of quadrics.
Let $W$ be the space of $(n+1)$-tuples $(q, r_1, \dots, r_n)$, where $(r_1, \dots, r_n) \in Y_0$ 
and $q \in H^0(\bbP^4, \mathcal{O}(2))$ vanishes at $r_1, \dots, r_n$. The natural projection 
$W \to Y_0 \subset (\bbP^4)^n$ given by 
$(q, r_1, \ldots, r_n) \mapsto (r_1, \ldots, r_n)$ is a vector bundle of rank $15 - n$.
By the above description, $X$ is $\Sym_n$-equivariantly birationally isomorphic to the total 
space of the Grassmannian bundle $\Gr(3, W)$. Twisting by $P$, we obtain the following diagram
\[ \xymatrix{ {\, }^P X \ar@{-->}[r]^{\sim \quad \quad}  &  {\, }^P \Gr(3, W) \ar@{->}[d]   \\
 {\, }^P Y  \ar@{<--}[r]^{ \sim}    &  {\, }^P Y_0 .} \]
 By Lemma~\ref{lem.90}, we conclude that ${\, }^P X$ is rational over ${\, }^P Y$.

$g = 2$ and $n = 2$. Here $Y \subset (\bbP^2)^* \times (\bbP^2)^*$ parametrizes pairs $(L_1, L_2)$ of distinct lines in $\bbP^2$.
Let $W \to  Y$ be the vector bundle whose fiber over $(L_1, L_2)$ consists of quartic forms $q \in H^0(\bbP^2, \mathcal{O}(4))$ such that
both $q_{\, | L_1}$ and $q_{ \, | L_2}$ vanish to second order at $p = L_1 \cap L_2$. Then $X$ is $\Sym_2$-equivariantly birationally isomorphic to $\bbP(W)$ over $Y$. By Lemma~\ref{lem.90},
 we conclude that ${\, }^P X$ is rational over ${\, }^P Y$.
 
 $g = 2$ and $n = 3$. Here $X$ is $\Sym_3$-equivariantly birationally isomorphic to $\bbP(W)$, where $W$ is the vector bundle over a suitable dense open subset 
 of $Y \subset (\bbP^1)^3 \times \bbP^1$ whose fiber over $((a_1, a_2, a_3), b)$ consists of bihomogeneous polynomials 
 $\phi \in H^0(\bbP^1 \times \bbP^1, \mathcal{O}(3, 2))$ vanishing at $(a_1, b)$, $(a_2, b)$ and $(a_3, b)$. 
 By Lemma~\ref{lem.90}, ${\, }^P X$ is rational over ${\, }^P Y$.
 
 $g = 1$ and $n = 3$. Here $(C', r_1, r_2, r_3) = f_B(C, p_1, p_2, p_3)$ is a smooth plane cubic curve with three distinct collinear points for every
 $(C, p_1, p_2, p_3) \in M_{1, 3}$. Conversely, every smooth cubic curve $C' \subset \bbP^2$ with three distinct
 collinear points $r_1, r_2, r_3 \in C'$ is of the form $f_B(C, p_1, p_2, p_3)$ for some $(C, p_1, p_2, p_3, B) \in X$,
 because $\mathcal{O}_{C'}(r_1 + r_2 + r_3) = \mathcal{O}_{C'}(1)$. Thus $X$ is $\Sym_3$-equivariantly birationally 
 isomorphic to $\bbP(W)$ over $Y$, where
 $W \to Y$ is the vector bundle whose fiber over $(r_1, r_2, r_3)$ consists of cubic forms in
 $H^0(\bbP^2, \mathcal{O}(3))$ vanishing at $(r_1, r_2, r_3)$.  Twisting by the $\Sym_3$-torsor $P$, we conclude that
 ${\, }^P X \simeq \bbP({\, }^P W)$ is rational over ${\, }^P Y$ by Lemma~\ref{lem.90}(f).
 
 $g = 1$ and $n = 4$. For $(C, p_1, \dots, p_4)$ in general position, $(C', r_1, \ldots , r_4) = 
 f_B(C, p_1, \ldots , p_4)$ is a smooth curve of genus $1$
 in $\bbP^3$ with four coplanar points no three of which are collinear. 
By~\cite[Exercise IV.3.6(b)]{hartshorne}, the space 
$H^0(\bbP^3, \mathcal{I}_{C'}(2))$ of global sections of the ideal sheaf
$\mathcal{I}_{C'}(2)$ is $2$-dimensional. Moreover, if $q_1, q_2$ is a basis of this space,
then $C'$ is a complete intersection of the quadrics $Q_1$ and $Q_2$ cut out by $q_1$ and $q_2$.
Conversely, a complete intersection of two smooth quadrics in $\bbP^3$ in general position is a smooth curve of genus $1$;
see~\cite[Exercise I.7.2]{hartshorne}. We conclude that $X$ is $\Sym_4$-equivariantly birationally isomorphic to the total space 
of the Grassmannian bundle $\Gr(2, W)$ over a suitably defined dense open subvariety $Y_0 \subset Y$, 
where $W \to Y_0$ is the vector bundle whose fiber over $(r_1, \ldots, r_4) \in Y$ 
consists of $q \in H^0(\bbP^3, \mathcal{O}(2))$ vanishing at $(r_1, \ldots, r_4)$. Twisting by a $\Sym_4$-torsor 
$P \to \Spec(F)$, we see that ${\, }^P X$ is birational to $\Gr(2, {\, }^P W)$ over ${\, }^P Y$. 
By Lemma~\ref{lem.90}(f), ${\, }^P X$ is rational over ${\, }^P Y$.

This concludes the proof of Lemma~\ref{lem.alpha} and thus of Theorem~\ref{thm.main}. \qed

\section*{ Acknowledgements} We are grateful to A. Massarenti, M.~Mella, M.~Talpo and A. Vistoli for helpful discussions.


\end{document}